\definecolor{refkeybis}{gray}{.65}
\definecolor{labelkeybis}{gray}{.65}
{\makeatletter
\def\SK@refcolor{\color{refkeybis}}%
\def\SK@labelcolor{\color{labelkeybis}}}
\numberwithin{equation}{section} 
\newtheorem{theorem}{Theorem}[section]
\newtheorem{remark}[theorem]{Remark}
\newtheorem{claim}[theorem]{Claim}
\newcommand{\R}{\boldsymbol R}
\newcommand{\x}{\boldsymbol x}
\renewcommand{\l}{L}
\renewcommand{\L}{\boldsymbol l}
\renewcommand{\t}{\boldsymbol t}
\newcommand{\p}{\boldsymbol p}
\newcommand{\q}{\boldsymbol q}
\renewcommand{\r}{\boldsymbol r}
\newcommand{\y}{\boldsymbol y}
\newcommand{\dist}{\operatorname{dist}}
\newcommand{\diam}{\operatorname{diam}}
\newcommand{\pk}{{k}}
\newcommand{\pkm}{{k-1}}
\newcommand{\pkp}{{\pk-1}}
\newcommand{\ctk}{c_0}
\newcommand{\dtk}{\delta_{n-\pk+1} (s) }
\newcommand{\dti}{\delta_i (s)}
\newcommand{\dtip}{\delta_{i+1} (s)}
\newcommand{\rti}{\gamma_i (s)}
\newcommand{\dtkl}{\delta_{n-\pk+1}(s)}
\newcommand{\dtkp}{\delta_{n-\pk+2}(s)}
\newcommand{\dtn}{\delta_{n}(s)}
\newcommand{\rtk}{\gamma_{n-\pk+1}(s)}
\newcommand{\rtkl}{\gamma_{n-\pk+1}(s)}
\newcommand{\A}{{\bf A}}
\newcommand{\B}{{\bf B}}
\newcommand{\C}{{\bf C}}
\newcommand{\Ak}{${\bf A}_\pk \,$}
\newcommand{\Akp}{${\bf A}_{\pk-1} \,$}
\newcommand{\Bk}{${\bf B}_\pk \,$}
\newcommand{\Bkp}{${\bf B}_{\pk-1} \,$}
\newcommand{\Ck}{${\bf C}_\pk \, $}
\newcommand{\Ckp}{${\bf C}_{\pk-1}\,$}
\newcommand{\Al}{${\bf A}_\pk \, $}
\newcommand{\Alm}{${\bf A}_{\pk-1} \,$}
\newcommand{\Bl}{${\bf B}_\pk \,$}
\newcommand{\Blm}{${\bf B}_{\pk-1}\,$}
\newcommand{\Cl}{${\bf C}_\pk \, $}
\newcommand{\Clm}{${\bf C}_{\pk-1}\,$}
\newcommand{\Aone}{${\bf A}_1 \, $}
\newcommand{\Bone}{${\bf B}_{1} \,$}
\newcommand{\Cone}{${\bf C}_{1} \,$}
\newcommand\zero{\boldsymbol 0}
\newcommand\red{\color{black}}
\newcommand\blue{\color{black}}
\newcommand\green{\color{black}}
\title{{\red On supporting hyperplanes  to convex  bodies}\thanks{The
authors are pleased to acknowledge the partial support of their research
by United States' National Science Foundation grant DMS 0969962 [AF] and
Natural Sciences and Engineering
Research Council of Canada Grants 371642-09 [YHK] and 217006-08 [RJM]. \ Any opinions, findings and conclusions or recommendations
expressed in this material are those of authors and reflect the views
neither of the United States' National Science Foundation nor of the
Natural Sciences and Engineering Research Council of Canada.
\copyright 2011 by the authors.
}}
\date{\today}
\author{
Alessio Figalli\thanks{Mathematics Dept., University of Texas, Austin TX 78712 USA {\tt figalli@math.utexas.edu}},
Young-Heon Kim\thanks{Mathematics Dept., University of British Columbia, Vancouver BC Canada V6T 1Z2 {\tt yhkim@math.ubc.ca}},
and Robert J. McCann\thanks{Mathematics Dept., University of Toronto, Toronto, Ontario Canada M5S 2E4 {\tt mccann@math.toronto.edu}}
}
\begin{document}

\maketitle

\begin{abstract}
Given a convex set and an interior point close to the boundary,
we prove the existence of a supporting hyperplane whose distance to the point is controlled,
in a dimensionally quantified way, by the thickness of the convex set in the orthogonal direction.
This result has important applications in the regularity theory for Monge-Amp\`ere type equations
arising in optimal transportation. 
\end{abstract}



%
{\blue \section{Introduction} }
In this note we establish an estimate which quantifies the dimensional dependence
of the claim that corresponding to any (interior) point near the boundary of a convex set,
is a supporting hyperplane much closer
than the thickness of the set in the orthogonal direction.
The main interest of {\blue our estimate (see Theorem~\ref{T:ratio})} is 
that
it allows us \cite{FKM} to extend --- for the first time --- 
{\red a H\"older continuity result of Caffarelli~\cite{C, caffC1a} concerning}
optimal transportation of bounded measurable densities
from the quadratic cost function
of Brenier \cite{B},
to the more general cost functions considered by Trudinger and Wang~\cite{TW}.
{\blue Caffarelli's regularity result has well-known connections to convex geometry (see \cite{gutierrez}), and the importance of its extension to more general optimal transport problem is highlighted in \cite{V2}.} 

{\blue Our}
theorem below is purely geometric, elementary to state,  and non-trivial to prove; it may well be of
independent interest. To emphasize this possibility, and the fact that it does not rely on
any auxiliary concepts arising from the intended application \cite{FKM},  we establish it in this
separate manuscript.
By so doing, we hope to ensure its accessibility to convex geometers who may
have no interest in optimal transportation,  as well as to its primary target audience, which
consists of researchers interested in the regularity of optimal mappings (or equivalently,
of degenerate elliptic solutions to the associated Monge-Amp\`ere type equations).

Let us start by recalling that
a {\em convex body} $\tilde S$ {\blue in the $n$-dimensional Euclidean space} $\R^n$ refers to a compact convex set with non-empty
interior. A well-known result of Fritz John \cite{John}, {\blue often called John's Lemma}, shows every convex body  can be translated
so that it contains an ellipsoid $E$ centered at the origin,  whose dilation by factor $n$
contains the translated copy $S$ of $\tilde S$:
\begin{equation}\label{E:well-centered}
E \subset S \subset n E.
\end{equation}
The constant $n$ is shown to be sharp by the standard simplex.
After this translation,  i.e.~when \eqref{E:well-centered} holds, we call $S$ {\em well-centered}.
{\blue We restrict our discussion to only well-centered convex bodies, but this does not cost any generality.}

For any point $\y \in S$ near the boundary of a well-centered convex body,
we claim it is possible to find
a direction in which the boundary of $S$ is much
closer than the thickness of $S$ in the same direction.  More precisely,
we claim it is possible to find
a line $\l$ through the origin whose intersection with
$S$ is large relative to the distance of $\y$ to a hyperplane outside of $S$ and orthogonal
to $\l$.  Here {\em orthogonal} refers to the ambient Euclidean inner product,  so that the
two distances being compared are measured along line segments parallel to $\l$.
The following theorem quantifies the dependence of their ratio
on the proximity of $\y$ to the boundary,  as reflected in the
degenerating factor $s^{1/2^{n-1}}$ in \eqref{E:key reformulation}
below. When $\y$ approaches the
boundary of $S$, the ratio of the two distances becomes more and more exaggerated,
algebraically fast with respect to the separation of $\y$ from the boundary
(but whose algebraic power decays exponentially fast in high dimensions).
Here $s$ measures the separation of $\y$ from the boundary in the Minkowski gauge of $S$
--- which of course is equivalent to any other norm on $\R^n$.  In what follows however,
`$\dist$', `$\diam$' (and orthogonality) always refer to distance and diameter
with respect to the Euclidean norm.  The application \cite{FKM} requires only
the special case {\green $s_0=1/(2n)$}.

\bigskip
\begin{theorem}
\label{T:ratio}
Let $S \subset \R^n$ be a well-centered convex body, meaning \eqref{E:well-centered}
holds for some ellipsoid $E$ centered at the origin. Fix $ 0 \le s \le s_0 <1$.
{\green For each $\y \in (1-s)\partial S$ there exists an hyperplane $P$
supporting $S$ such that 
 \begin{equation}\label{E:key reformulation}
\dist(\y, P) \le c(n,s_0) s^{ 1/2^{n-1}} \diam(P^\perp \cap S).
 \end{equation}
Here $P^\perp$ denotes
the (unique) line orthogonal to $P$ passing through the origin, and
$c(n,s_0)$ is a constant depending only on $n$ and $s_0$, namely
$c(n,s_0) = n^{3/2} (n-\frac{1}{2})
\Big( \frac{1+(s_0)^{1/2^n}}{1-(s_0)^{1/2^n}} \Big)^{n-1}
$. }
\end{theorem}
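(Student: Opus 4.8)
The plan is to argue by induction on the dimension $n$, exploiting the self-similar structure of the estimate: the exponent $1/2^{n-1}$ and the power $(n-1)$ in the constant both suggest that each time we drop a dimension we square the separation parameter and pick up one more factor of the form $\frac{1+s^{1/2^k}}{1-s^{1/2^k}}$. For the base case $n=1$ the statement is essentially trivial, since a well-centered convex body in $\R^1$ is a symmetric interval and the supporting hyperplane is just an endpoint. For the inductive step, given $\y \in (1-s)\partial S$, I would first use John's Lemma \eqref{E:well-centered} to put $S$ between two concentric balls (after an affine change of coordinates that is harmless because both sides of \eqref{E:key reformulation} scale compatibly under linear maps — though one must be careful that the affine normalization preserves ``well-centered'' only up to the factor $n$, which is where some of the $n$-dependence in $c(n,s_0)$ enters). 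The point $\y$ being close to $\partial S$ (in the gauge sense, separation $s$) means there is a unit vector $e$ and a hyperplane $P_0$ orthogonal to $e$ supporting $S$ with $\dist(\y,P_0)$ comparably small; the issue is that the line $P_0^\perp$ through the origin need not meet $S$ in a long segment, so $P_0$ by itself is not good enough and we must tilt it.

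The heart of the argument is the dimension reduction. I would intersect $S$ with a hyperplane $H$ through $\y$ chosen so that $S \cap H$ is a convex body of dimension $n-1$ in which $\y$ sits at relative boundary separation roughly $\sqrt{s}$ rather than $s$ — this square-root loss is the mechanism producing the $2^{n-1}$ in the exponent, and it comes from a quadratic (parabolic) comparison: near a boundary point a convex body looks, in the worst case, like the region above a paraboloid, so moving a distance $\delta$ inward along the boundary direction only moves you $\delta^2$ inward transversally. Concretely, one slices off a cap of $S$ near $\y$, rescales it to be well-centered in its own right in dimension $n-1$ (again invoking John's Lemma inside the slice), applies the inductive hypothesis to get a supporting hyperplane $P'$ of $S\cap H$ within $(n-1)$ with the desired ratio bound at parameter $\sqrt{s}$, and then lifts $P'$ to a genuine supporting hyperplane $P$ of $S$ in $\R^n$. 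The lift must be done so that $P^\perp\cap S$ is not much shorter than $P'^\perp \cap (S\cap H)$ and $\dist(\y,P)$ is not much larger than $\dist(\y,P')$; controlling this lift is where the factor $\frac{1+s^{1/2^n}}{1-s^{1/2^n}}$ gets multiplied in at each stage, accumulating to the $(n-1)$-st power.

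The main obstacle I anticipate is precisely this lifting/gluing step: choosing the slicing hyperplane $H$ and the lifted hyperplane $P$ so that \emph{both} the numerator $\dist(\y,P)$ and the denominator $\diam(P^\perp\cap S)$ transform controllably, simultaneously. One has to track how the Euclidean quantities interact with the gauge-based separation parameter $s$ under the affine renormalizations (which distort Euclidean lengths but preserve ratios along fixed directions), and ensure the renormalization of the slice does not blow up the constant beyond a single extra factor per dimension. A secondary technical point is verifying that the intersection $S\cap H$ can indeed be renormalized to a well-centered body of the right type with $\y$ at separation $O(\sqrt s)$ — this requires a quantitative convexity estimate (the parabola comparison) relating how fast $\partial S$ pulls away from a supporting hyperplane, and getting the constant in that estimate sharp enough to land exactly the stated $c(n,s_0)$. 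Once the bookkeeping of constants is set up correctly, the induction closes.
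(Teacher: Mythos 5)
Your proposal has genuine gaps at the two places where the actual difficulty lives, and one of its enabling assumptions is false. First, the claim that an affine (John) normalization is ``harmless because both sides of \eqref{E:key reformulation} scale compatibly under linear maps'' does not hold: the inequality couples $\dist(\y,P)$ with $\diam(P^\perp\cap S)$ through the requirement that $P^\perp$ be \emph{Euclidean-orthogonal} to $P$, and orthogonality is not preserved by the linear map that rounds $S$ into a ball. After normalization you would be proving a statement about a different, skewed notion of orthogonality, and undoing the map reintroduces exactly the eccentricity (the thin directions of $S$) that the theorem must control; this is why the paper explicitly refrains from renormalizing and instead only uses John's ellipsoid to fix coordinates and a comparable rectangle $R_n$ with $\frac{1}{n^{3/2}}R_n\subset S\subset R_n$. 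Second, the mechanism you propose for the square-root loss --- that ``near a boundary point a convex body looks, in the worst case, like the region above a paraboloid'' --- is not a theorem: a convex boundary can have a conical corner or a flat face, and convexity gives no two-sided quantitative paraboloid comparison. In the paper the exponent $1/2^{n-1}$ does not come from boundary curvature at all; it comes from an artificial dichotomy threshold $\gamma_i=s^{1/2^i}$: at each stage either the thickness ratio $\diam(P_k^\perp\cap R_k)/2\dist(\zero,P_k)$ exceeds $s^{1/2^{n-k+1}}$ (favorable case, stop), or the body is so thin in that coordinate direction that one can project out that coordinate, and the admissible separation parameter degrades from $\delta_i$ to $\delta_{i+1}\ge\delta_i+2\gamma_i$.

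Third, and most importantly, the lifting step you flag as ``the main obstacle'' is precisely the content of the theorem, and your sketch gives no construction for it. Slicing $S$ by a hyperplane $H$ through $\y$, re-centering and re-applying John's Lemma inside the slice produces a supporting hyperplane of $S\cap H$ relative to a \emph{new} center, whereas $P^\perp$ in the conclusion must be the line through the original origin; and a supporting hyperplane of the slice need not extend or tilt to a supporting hyperplane of $S$ with comparable $\dist(\y,P)$ and $\diam(P^\perp\cap S)$ --- the thin degenerating bodies mentioned in the introduction defeat all such naive liftings. The paper avoids this by never slicing through $\y$: it projects along coordinate axes of the John ellipsoid (reordered at each step so the projection kills the direction in which $R_k$ is thin relative to $\dist(\zero,P_k)$), defines $P_{k-1}=\pi_{k-1}(P_k\cap Q_k^-)$, and propagates three invariants (membership $\p_k\in P_k\cap R_k$ with $\y_k\in[\zero,\p_k]$, the ratio bound $\dist(\y_k,P_k)/\dist(\zero,P_k)\le\delta_{n-k+1}$, and the property that the extension $\tilde P_k$ misses the interior of $S$), the last of which is what finally yields an honest supporting hyperplane of $S$ by translation. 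Without a concrete replacement for this bookkeeping, your induction does not close.
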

\begin{remark}
{\blue For $n=1$, the} constant $c(1,s_0)$ is sharp and \eqref{E:key reformulation} becomes an obvious equality in that case;
we have not investigated sharpness of $c(n,s_0)$ or of the power $2^{1-n}$ in higher dimensions.
\end{remark}


{\blue  The key point of the estimate \eqref{E:key reformulation} is that the ratio $\dist(\y, P)/\diam(P^\perp \cap S)$}
{\green goes to $0$ as $s \to 0$ in a ``uniform way'', independent of the shape of $S$.}
Observe that if $s=0$ we can choose {\blue  $P$ to support} $S$ at $\y \in \partial S$,
but for $s>0$ it is less obvious how to choose {\blue $P$ (and hence $L$).} 
{\blue The difficulty for proving this estimate is on the arbitrariness of the convex body $S$. For example, if $S$ is
the round ball, then the estimate~\eqref{E:key reformulation} becomes trivial}
{\green (indeed, one may even replace $c(n,s_0) s^{ 1/2^{n-1}}$ with $s$, and the supporting hyperplane $P$
shall be chosen to be orthogonal to the vector $\y$).} {\blue For a general convex bodiy $S$,
there are three natural ways to try to generalize such choice: (i) to choose $P$ orthogonal to $\y$, (ii) to choose $P$ supporting
$S$ at the intersection of the half line $\overrightarrow{\zero\y}$ with $\partial S$, or (iii) to choose $P$ closest to $\y$. 
However, in all these three cases it is not hard to find counterexamples (some family of degenerating thin convex bodies)
showing such rather natural choices of hyperplanes do not work, namely, {\em not} yielding
{\green a uniform convergence to $0$ of the ratio $\dist(\y, P)/\diam(P^\perp \cap S)$ as $s\to 0$}.   To prove Theorem~\ref{T:ratio}  we find an algorithm which allows to choose appropriate $P$ by an inductive recursion, reducing the dimension of the task confronted at each step.}


{\blue 
One of the reasons why the estimate \eqref{E:key reformulation} is nontrivial is that the set of convex bodies is not compact. {\red A common} and powerful way to deal with such non-compactness is to use John's Lemma~\cite{John} (see \eqref{E:well-centered}), to renormalize the convex bodies via affine maps so that the resulting shapes become roughly close to the round ball (with a uniformly bounded scale factor). This way, one can easily derive some estimates for quantities that are affine invariant. One such example is the classical Alexandrov estimates for the Monge-Amp\`ere measure associated to sections of convex functions: see, for example \cite{gutierrez}.  
However, in our case the inequality \eqref{E:key reformulation} involves orthogonality with respect to the {\em fixed} Euclidean norm, which is not affine invariant. Therefore, we cannot derive the estimate \eqref{E:key reformulation}  by applying John's Lemma. 

The remainder of this paper is devoted to the proof of Theorem~\ref{T:ratio}, which is completely elementary though quite nontrivial.}
%

{\blue 
\section{Proof of Theorem~\ref{T:ratio}}
}

Let $S \subset \R^n$ be a well-centered convex body.
{\blue Let the positive numbers $a^1, \cdots, a^n \in \R_+$ denote the lengths of the principal
semi-axes of the inner ellipsoid $E$ of Fritz John \eqref{E:well-centered}. 
One can regard these $a^i$'s as the coordinate components of the vector ${\boldsymbol a}= (a^1, \cdots , a^n)$.
In the following, superscripts will be used to denote such coordinate components for vectors, and
for all the other cases they will mean powers {\green (we believe this should not create either confusion or ambiguity)}.}

Use these principal axes to choose coordinates, with the origin $\zero$ at the center of $E$.
We still have the freedom to choose the order in which these axes are enumerated,  which
 we shall exploit {\blue especially} at \eqref{E:j=k}.
In these coordinates $E$ is represented as
\begin{align*}
E = \biggl\{ \x = (x^1, \cdots, x^n ) \in \R^n  \ | \ \sum_{i=1}^n \Big{(}\frac{x^i}{a^i}\Big{)}^2 \le 1 \biggr\}.
\end{align*}
The rectangle
\begin{align*}
R_n &= \{ \x = (x^1, \cdots, x^n ) \in \R^n  \ | \  | x^i | \le n a^i ,  \ i =1, \cdots, n \}
\end{align*}
circumscribed around the outer ellipse $nE$ will also play a crucial role.
Observe $\frac{1}{n^{3/2}}R_n \subset S \subset R_n$.
In particular, $S$ is comparable (in size and shape) to  $R_n$.

\bigskip
{\blue \subsection{Initial step in the recursive algorithm}\label{SS:initial step}}
{\green Fix $s_0 \in (0,1),$ and given $\y \in (1-s) \partial S$, $0 \le s \le s_0$,}
let $\p_n$ be the intersection of the half line $\overrightarrow{\zero\y}$ with $\partial S$.
We pick a tangent hyperplane $P_n$ (which may not be unique)  to $S$ at $\p_n$. Using
similar triangles, we deduce that
\begin{align*}
\frac{\dist(\y, P_n)}{\dist(\zero,P_n)} = s.
\end{align*}

{\blue 
\subsubsection*{Two alternatives}
We consider the following two exclusive cases:

 {\bf Favorable case:}}
Suppose we are lucky enough that
\begin{align}\label{E:trivial case}
\frac{\diam(P_n^\perp\cap R_n)}{2\dist(\zero, P_n)} \ge s^{1/2} .
\end{align}
Then, the choice $P=P_n$ leads to the desired result {\blue \eqref{E:key reformulation}} since
$\diam (P_n^\perp \cap R_n) \le n^{3/2} \diam (P_n^\perp \cap S)$, thus
\begin{align*}
\frac{\dist(\y, P_n)}{\diam (P_n^\perp\cap S)}
& \le n^{3/2}  \frac{\dist(\y, P_n)}{\dist(\zero, P_n)} \frac{\dist(\zero, P_n)}{\diam(P_n^\perp\cap R_n)}\\
& \le {n^{3/2}} \frac{s}{2s^{1/2}} \\
& = \frac{n^{3/2}}2 s^{1/2}.
\end{align*}

 {\blue {\bf Unfavorable case:}}
If the convex {\blue body}  $S$ is very thin, or equivalently if the outer rectangle $R_n$ is very thin, then the ratio
{\blue ${\diam(P_n^\perp\cap R_n)}/{2\dist(\zero, P_n)}$} can be much smaller than $s^{1/2}$,
in which case \eqref{E:trivial case} fails.
For such situations, we now describe a recursive algorithm which shows that whenever
\eqref{E:trivial case} is violated, {\blue after at most $(n-1)$-steps} it is possible to find {\blue an alternative} hyperplane $P$ (generally different
from $P_n$) which fulfils the desired conclusion \eqref{E:key reformulation}.

\bigskip
{\blue 

\subsection{Notation in the recursive algorithm}

The basic idea of the following recursive algorithm is to repeat the previous {\bf two alternatives} in the inductive steps,
with decreasing dimension. Since this is a  finite dimensional situation, such algorithm should terminate,
and we show it does so yielding the desired result  \eqref{E:key reformulation}.
One of the key points of the argument is to choose the {\em right} geometric configuration.
This requires in particular some careful choice of the terms $\rti, \dti$, and $\ctk$, as we fix the notation {\green below}.

}
Let $i,\pk \in \{1,\ldots, n\}$ and  $s \in [0,s_0]$.
{\blue We define
\begin{align*}
\rti & := s^{1/2^{i}}; \\
\dti &:= (2i-1) s^{1/2^{i-1}}.
\end{align*}
These satisfy the following key relations:}
\begin{align}\nonumber
\dtip &\ge  \dti + 2 \rti ; \\\label{E:dtk rtk ratio}
\frac{\dti}{\rti}  & =  (2i -1) \rti .
\end{align}


We use coordinates {\blue $( x^{1}, \cdots, x^{\pk+1})$ on $\R^{\pk +1 }$.}
Define the projections  $\pi_\pk : \R^{\pk+1} \to \R^{\pk}$ by
\begin{align*}
& \pi_\pk (x^{1}, \cdots, x^\pk, x^{\pk+1}) = (x^{1}, \cdots, x^\pk).
\end{align*}
{\blue Observe that each $\pi_\pk$ is determined by the choice of the coordinate axis for $x^{\pk+1}$ for $\R^{\pk+1}$, and such choice will be made individually at each step of the recursive algorithm. This is an important point to remember throughout the proof.}
For $\pk<n$, define the rectangles $R_\pk$ in $\R^\pk$ inductively as dilated projections of
$R_n$:
\begin{align*}
R_\pk = \ctk \pi_\pk (R_{\pk+1}).
\end{align*}
Here,  the dilation factor $\ctk$ (with respect to the origin $\zero$) is given by
\begin{align}\label{E:cn}
\ctk &:= \max_{0 \le s \le s_0; \,  1 \le i \le n} \Big{[} 1 + 2 \frac{\rti}{1-\rti}\Big{]}  \\
&= \frac{1+(s_0)^{1/2^n}}{1-(s_0)^{1/2^n}},
\end{align}
where the monotone dependence of $\rti$ on both $s \le s_0$ and $i\le n$ has been used.
These rectangles can also be written as
 \begin{align}
\label{eq:Rk}
R_\pk =\{ \x = (x^{1}, \cdots, x^\pk ) \in \R^{\pk}  \ | \  | x^i | \le \ctk^{n-k} {\blue n a^i} , \ \ 1 \le i \le \pk \}.
\end{align} 
Let $Q^\pm_\pk$ (and $Q^0_\pk$) be the parallel hyperplanes
in $\R^\pk$ which form the boundary of (and bisect) $R_\pk$
orthogonally to the $x^\pk$-axis:
\begin{align*}
Q^\pm_\pk &= \{\x = (x^1, \cdots, x^\pk) \in \R^{\pk} \ | \  x^\pk = \pm \ctk^{n-\pk} {\blue n  a^\pk } \};\\
Q^0_\pk & =  \{\x = (x^1, \cdots, x^\pk) \in \R^{\pk} \ | \  x^\pk =0 \}.\\
\end{align*}
Set $\y_n=\y=(y^1,\ldots,y^n)$ and define its projections recursively
 \begin{align*}
\y_\pk := \pi_{\pk} \circ \pi_{\pk+1} \circ \cdots \circ \pi_{n-1} (\y )=(y^1,\ldots,y^\pk).
\end{align*}
Since $\ctk \ge 1$ it is clear that  $\y_\pk \in R_\pk$.

In the following we will define some hyperplanes $P_\pk \subset \R^\pk$ inductively.
For such  a hyperplane $P_\pk \subset \R^{\pk}$ use  $\tilde P_\pk$ to denote the extension of
$P_\pk$ to the hyperplane in $\R^n$ parallel to the $x^{k+1}$- through $x^n$-axes,
i.e., which satisfies
 \begin{align*}
 \pi_{\pk} \circ \cdots \circ \pi_{n-1} (\tilde P_\pk ) = P_\pk.
\end{align*}

{\blue \subsection{ The structure of the recursive algorithm}
Recall the point $\p_n =  \overrightarrow{\zero\y} \cap \partial S$ and the supporting hyperplane $P_n$, with $\p_n \in P_n$, which are given in the initial step of the recursive algorithm (Section~\ref{SS:initial step}). 
}
Let us  describe the recursion in which we use $(\p_\pk, P_\pk)$ to define
$(\p_{\pk-1}$, $P_{\pk-1})$. 
{\blue  We first list three conditions that are required at each step of the recursive algorithm.}
 For the $i^{th}$ step (here $i=n-\pk+1$) assume that
\begin{align*}
&\hbox{\Ak:}    \qquad
\p_\pk \in  P_\pk \cap R_\pk ,  \qquad  \y_\pk \in [\zero, \p_\pk];\\
&\hbox{\Bk:}  \qquad
  \frac{\dist(\y_\pk, P_\pk)}{\dist (\zero, P_\pk) } \le \dtkl ;   \\
& \hbox{\Ck:}  \qquad \hbox{ $\tilde P_\pk$ does not intersect the interior of $S$}.
\end{align*}
Notice that {\blue for $\pk=n$, either the favorable situation \eqref{E:trivial case} holds, in which case there is nothing further to prove (and so no need to proceed to the next step),
or else $\A_n,  \B_n, \C_n$  are satisfied  by our initial choice of
$\p_n$ and $P_n$.
}

Starting from $\pk=n$,  we shall decrease $\pk$ one step at a time until the algorithm
terminates.  Whether or not the recursion terminates at a given value of $\pk$ is determined
by the following dichotomy:
\begin{align}\label{E:good ratio k}
&\hbox{\bf Case I (favorable case):}  & \frac{\diam(P_\pk^\perp\cap R_\pk)}{2\dist(\zero, P_\pk)} & \ge \rtkl;    \\ \label{E:ratio assumption}
& \hbox{\bf Case II (unfavorable case):} & \frac{\diam(P_\pk^\perp\cap R_\pk)}{2\dist(\zero, P_\pk)} & < \rtkl.
\end{align}
If {\bf Case I} holds for some value of $\pk$ we shall discover we are in a favorable situation
--- analogous to \eqref{E:trivial case} --- which allows us to terminate the recursion and
obtain the desired result {\blue \eqref{E:key reformulation}}.  On the other hand,  if {\bf Case II} holds for the given value of
$\pk$,  we shall see we can use $(\p_\pk, P_\pk)$ satisfying the inductive hypotheses
\Al, \Bl and \Cl, {\blue and the condition \eqref{E:ratio assumption}}, to construct $(\p_{\pk-1},P_{\pk-1})$ satisfying \Alm, \Blm and \Clm.  We
then decrease $\pk$ and proceed to the next step of the recursion.  In the worst case
the recursion continues until $\pk=1$, and we find $(p_1,P_1)$ satisfying
\Aone, \Bone and \Cone.  In this case we show in the last section below that
the desired result {\blue \eqref{E:key reformulation}} can again be obtained, to complete the proof of the theorem. 

\bigskip

{\blue \subsection{Case I, \eqref{E:good ratio k} holds for some $\pk \ge 2$:
the recursion terminates with the desired result.}
}
As soon as we reach {\blue some} $\pk \ge 2$ for which {\blue the condition} \eqref{E:good ratio k} holds, we
stop the recursion. We now show in this case the desired result {\blue \eqref{E:key reformulation}}  follows. {\blue Here,  the assumptions \Bk and \Ck are crucial.}
Since, by the construction of {\blue $\y_\pk$ and} $\tilde P_\pk$, $\dist(\y, \tilde P_\pk) = \dist(\y_\pk, P_\pk)$ and
$\dist(\zero, \tilde P_\pk)= \dist(\zero, P_\pk)$, recalling \eqref{eq:Rk} we get
\begin{align}\label{E:ratio for L k}
\frac{\dist (\y, \tilde P_\pk)}{\ctk^{n-\pk} \diam(\tilde P_\pk^\perp\cap R_n)}
& =  \frac{\dist(\y_\pk, P_\pk)}{\dist (\zero, P_\pk) } \frac{\dist(\zero, P_\pk)}{ \diam(P_\pk^\perp\cap R_\pk)}\cr
&\le   \frac{\dtkl}{2\rtkl}  \qquad \hbox{(by \eqref{E:good ratio k} and  \Bk)}.
\end{align}
Let $\tilde H_\pk$ be the half-space containing $\zero$,
with $\partial \tilde H_\pk  = \tilde P_\pk $.
Notice that $S \subset \tilde H_\pk$ by assumption \Ck. Thus, translating $\tilde P_\pk$
toward $S$, one can find a hyperplane $P$ {\blue supporting $S$,} which is parallel to $ \tilde P_\pk$.
Since $\dist(\y, P) \le \dist(\y, \tilde  P_\pk)$ and
$\diam(P^\perp\cap S) = \diam (\tilde  P_\pk^\perp\cap S) \ge n^{-3/2}\diam (\tilde  P_\pk^\perp\cap R_n) $,
from \eqref{E:dtk rtk ratio} and \eqref{E:ratio for L k} we have
\begin{align*}
\frac{\dist(\y, P)}{\diam(P^\perp\cap S)}
& \le n^{3/2}  \ctk^{n-\pk}    \frac{\dtkl}{2\rtkl}\\
& = n^{3/2}  \ctk^{n-\pk}   (n -k+{\textstyle \frac{1}{2}})  {\blue s^{1/2^{n-k+1}}} \\
& \le n^{3/2}  \ctk^{n-1} (n -{\textstyle \frac{3}{2}})  s^{1/2^{n-1}}
\end{align*}
(recall that $k \geq 2$), which gives the desired result {\blue \eqref{E:key reformulation}}.


\bigskip

\subsection{Case II, \eqref{E:ratio assumption} holds for $2 \le \pk \le n$:
the recursion continues.}

A remark before we proceed: in the following argument, we assume that any claimed intersections
between affine subspaces such as lines and (hyper-)planes actually exist and have the
expected (i.e. generic) dimension.  This costs no generality for two reasons:
\begin{itemize}
\item  To avoid parallelism we can perturb if necessary (i.e. rotate and/or translate slightly) the affine subspaces.
\item  We will obtain estimates which are not sensitive to small perturbations,
so the estimates also hold without the perturbation.
Moreover, the obtained bounds then imply that the claimed intersections do indeed exist.
\end{itemize}
Similarly, we can also avoid, if necessary, the cases where some lengths and/or distances
degenerate to zero.

{\blue 
\subsubsection*{Some preliminaries and the construction of $P_{\pk-1}$}
}
To define $P_{\pk-1}$ and $\p_{\pk-1}$ we set-up preliminaries.
Let us first consider the point $\r_{\pk}={\blue (r_{\pk}^1, \cdots, r_{\pk}^k) \in \R^k }$ defined as the closest point on $P_\pk$ to the origin, so that $ \dist (\zero, \r_{\pk}) = \dist (\zero, P_\pk)$.
Note $\r_{\pk}$ is outside the rectangle $R_\pk$, since otherwise,
$$
\frac{\diam(P_\pk^\perp\cap R_\pk )}{2\dist(\zero, P_\pk)} \geq 1
$$
contradicting assumption \eqref{E:ratio assumption}.
Let
$\r^+_{\pk} {\blue =((\r^+_{\pk})^1, \cdots, (\r^+_{\pk})^k) }=  [0, \r_{\pk}] \cap \partial R_\pk$
denote the intersection point of the ray through $\r_{\pk}$ with whichever of the {\blue $2k$} faces
of this rectangle it {\blue intersects.} 
Without loss of generality,  suppose the axes are enumerated
so that the intersection occurs on the face of $R_\pk$ contained in $Q^+_\pk$. {\blue (Observe that this choice of coordinates affects
the definition of $\pi_\pkm$.)} {\red Then,  
\begin{align}\label{E:j=k}
(\r^+_{\pk})^\pk = \ctk^{n-\pk} a^\pk > 0
\end{align}
holds.}
Because
$\dist(\zero, \r^+_{\pk}) =  \frac{1}{2}\diam (P_\pk^\perp\cap R_\pk)$ and  $\dist(\zero, \r_{\pk}) = \dist(\zero, P_\pk)$, we have
 \begin{align}\label{E:similarity}
 \frac{\dist(\zero, \r^+_{\pk})}{ \dist(\zero, \r_{\pk})}
 =\frac{\diam (P_\pk^\perp\cap R_\pk)}{ 2 \dist(\zero, P_\pk)}.
 \end{align}
Now, define the hyperplane $P_{\pkm}$ in $\R^{\pk-1}$ by
 \begin{align}\label{E:choose L}
P_{\pkm} &:= \pi_{\pkm} (P_\pk \cap Q^-_\pk).
\end{align}

\subsubsection*{Verification of \Ckp}
Before proceeding further, let us verify that \Ckp\ follows from \Ck\ as a consequence.

Since $r_\pk^\pk=\sqrt{|\r_\pk|^2 -|\pi_\pkp(\r_\pk)|^2}$,
by a simple geometric argument (see Figure 1) the construction above yields $\r_\pkp = \lambda \pi_\pkp(\r_\pk)$
(recall that $\r_\pkp$ is defined as the closest point on $P_{\pk -1}$ to the origin),
with $\lambda = (|\r_\pk|^2 + \ctk^{n-\pk} a^\pk r_\pk^\pk)/|\pi_\pkp(\r_\pk)|^2>1$.

\begin{figure}
\centerline{\epsfysize=2.2truein\epsfbox{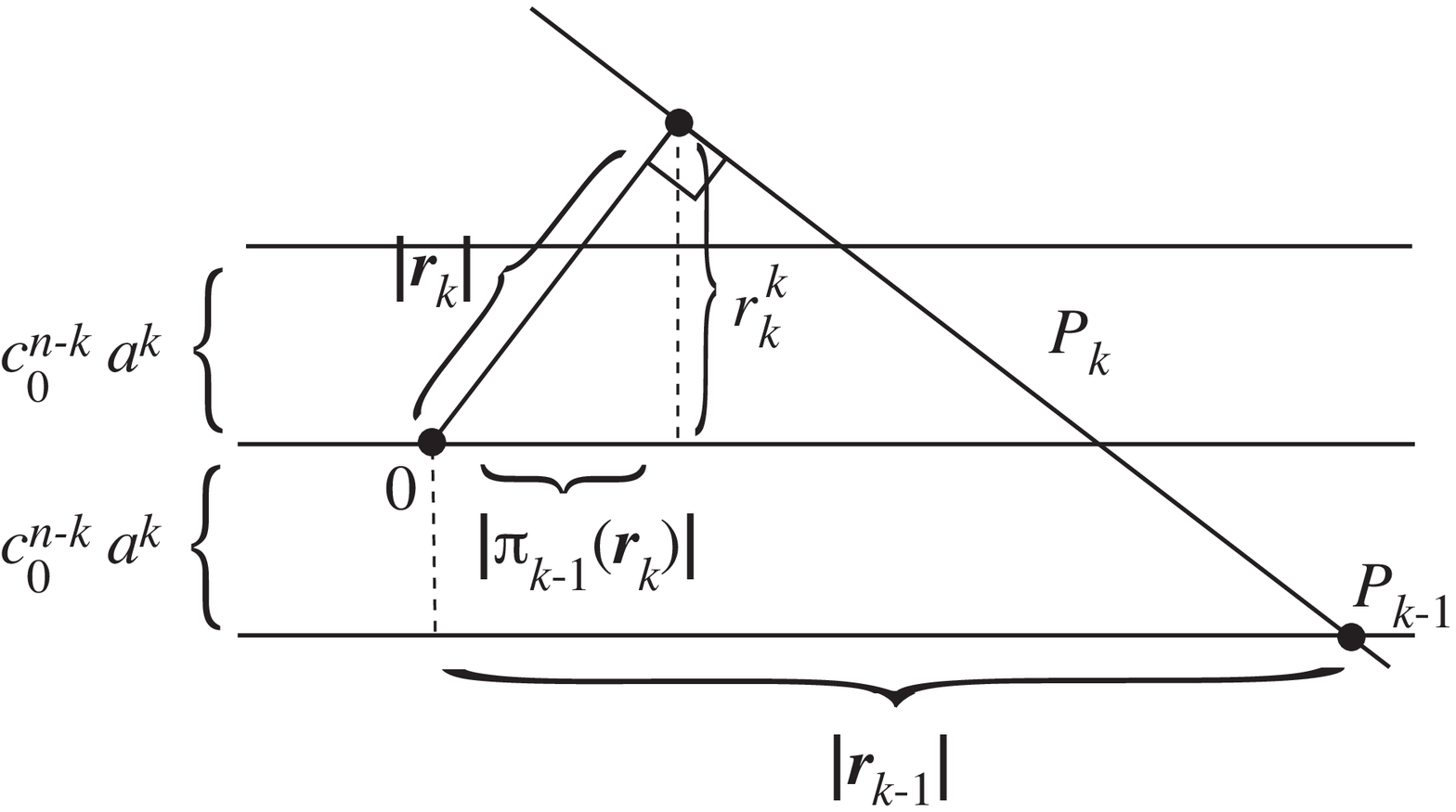}}
\caption{By a simple argument based on similar triangles, one can easily compute $|\pi_\pkp(\r_\pk)|$ in terms of
$|\r_\pk|$, $\ctk^{n-\pk} a^\pk$, $r_\pk^\pk$, and $|\pi_\pkp(\r_\pk)|$.}
\label{fig1}
\end{figure}

For each $\x=(x^1,\ldots,x^n)=:\x_n$ from the interior 
of $S$, let
$\x_\pk := \pi_\pk \circ \pi_{\pk+1} \ldots \circ \pi_{n-1}(\x) = (x^1,\ldots,x^\pk)$.
{\blue To verify \Ckp we need to show that $\x_\pkp \cdot \r_{\pkp} \le |\r_\pkp|^2$. Without loss of generality assume
that $\x_\pkp \cdot \r_{\pkp}>0$.}
Hence, {\blue since $\lambda >1$,}
\begin{align*}
\x_\pkp \cdot \r_{\pkp}
&< \lambda (\x_\pk\cdot\r_\pk + a^k r^\pk_\pk)\\
&< \lambda (|\r_{\pk}|^2  + a^k r^\pk_\pk) \\
&\leq |\r_{\pkp}|^2,
\end{align*}
where the first inequality follows from $x^k> -a^k$, the second from \Ck,
and the third from $\ctk\geq 1$ and the definition of $\lambda$.
This yields \Ckp\ as desired. 

\bigskip
{\blue  \subsubsection*{Construction of $\p_\pkp$} }
We will now find
$\p_\pkp \in \R^{\pkp}$ so that \Akp and \Bkp are satisfied.
To define $\p_\pkp$, consider the two-dimensional plane $T_\pk \subset \R^{\pk}$,
generated by the $x^\pk$-axis and the half-line $\overrightarrow{\zero\p_\pk}$
(which is the same as $\overrightarrow{\zero\y_\pk}$).
(We perturb $\p_\pk$ slightly if necessary to ensure it does not lie on the $x^k$-axis.)
Since $\dim T_\pk + \dim P_\pk = \pk+1$, the affine intersection
$\l_\pk := T_\pk \cap P_\pk \subset \R^\pk$ contains at least a line;
it contains at most a line since
$\zero \in T_\pk \setminus P_\pk$.  Notice that
the line $\l_\pk$ passes through the point $\p_\pk$
and the hyperplane $Q^-_\pk$, as in Figure 2.

\begin{figure}
\centerline{\epsfysize=3.3truein\epsfbox{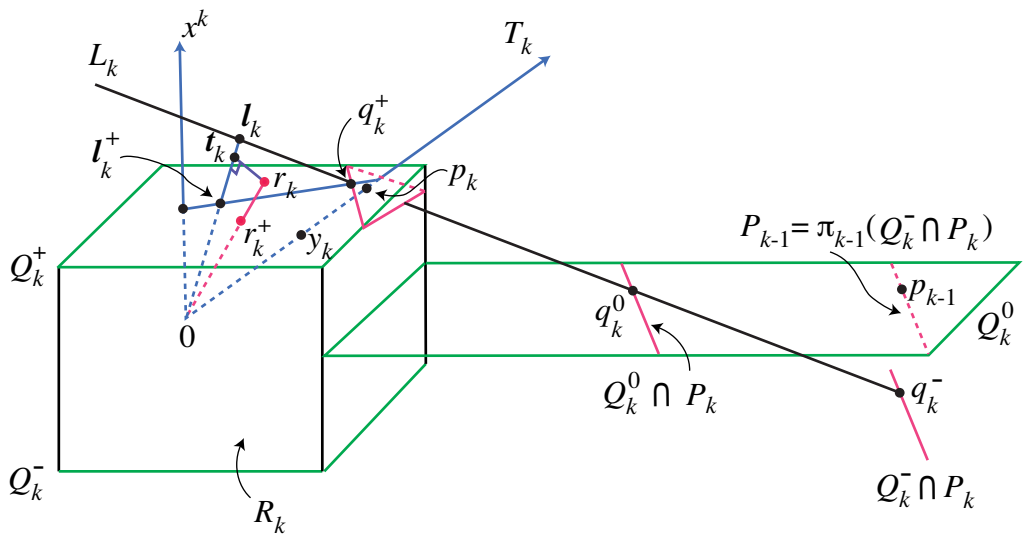}}
\caption{The geometric argument behind the construction of $\p_\pkp$. Observe that, for $s$ small, this figure (as well
as the other figures) is not very ``realistic'', as $R_{\pk}$
should be very thin in the ``horizontal'' directions and $P_{\pk-1}$ very close to $R_{\pk}$.
However, even if the proportions are not respected, this figure should help {\blue the reader {\green to} follow}
the argument described below.
{\blue Note that one can construct convex sets for which $\y_\pk$ and $\p_\pk$ may lie below $Q^0_\pk$.} {\green However,
although the picture is just indicative, our proof is purely analytic and works independently of the position of $\y_\pk$ and $\p_\pk$ with respect to $Q^0_\pk$.}}
\label{fig2}
\end{figure}

We define 
\begin{align}\label{E:choose p}
\p_\pkp := \pi_{\pk-1} ( \l_\pk \cap Q^-_\pk).
\end{align}
Notice that
\begin{align}\label{E:Akp less}
\p_\pkp \in P_{\pk-1},  \qquad  \y_\pkp \in [\zero, \p_\pkp].
\end{align}
In particular,  to verify \Akp we only need to check $\p_\pkp \in R_\pkp$.

\bigskip
{\blue \subsubsection*{Preparation before verifying \Akp and \Bkp }}
To verify \Akp and \Bkp we first find a few relevant points {\blue on the plane $T_\pk$.
What we are going to describe is summarized in Figure 2.}

The line  $\l_\pk$ intersects with the three parallel hyperplanes
$Q^0_\pk, Q^\pm_\pk \subset \R^{\pk}$.
Denote  the three intersection points by
\begin{align*}
\q^+_\pk &:= \l_\pk \cap  Q^+_\pk;\\
\q^0_\pk &:=  \l_\pk \cap Q^0_\pk ;\\
\q^-_\pk &:= \l_\pk \cap  Q^-_\pk.
\end{align*}
Notice that $\p_\pk \in [\q^+_\pk, \q^-_\pk]$ and  $\pi_{\pk-1} (\q^-_\pk)  = \p_\pkp$.

Let $\L_{\pk}$ denote the closest point on $\l_\pk$ to the origin $\zero$, and
let $\t_{\pk} \in T_\pk$ be the orthogonal projection of $\r_{\pk}$ on the plane $T_\pk$.
Notice  that, {\green since $\r_{\pk}$ is the closest point in $P_\pk$ to $\zero$
and $\l_\pk \subset P_\pk$, the orthogonal projection of the ray $\overrightarrow{\zero \r_\pk}$ to $T_\pk$ is the ray
$\overrightarrow{\zero \L_{\pk}}$ and 
$|\r_\pk| \leq |\L_{\pk}|$ (to see this, one may consider the plane passing through
$\zero$, $\r_\pk$, and $\L_\pk$, and observe that it cuts $\l_\pk$ orthogonally)}. In particular,
the point $\t_{\pk}$ belongs to the line segment  $[\zero, \L_{\pk}]$.
Moreover, since $T_k$ contains the $x^k$-axis, $t_\pk^\pk = r_\pk^\pk $. Hence,
by our assumption \eqref{E:j=k},
$\t_{\pk}$ belongs to the region over $Q^+_\pk$, namely
$t_\pk^\pk > \ctk^{n-k} a^\pk$, as (therefore) does $\L_{\pk}$.
Consider the point
\begin{align*}
\L^+_{\pk} : = [\zero, \L_{\pk}] \cap Q^+_\pk
\end{align*}
with $(\L^+_{\pk})^\pk = (\r^+_{\pk})^\pk = \ctk^{n-k} a^\pk$.  Since (as we observed above)
$t_\pk^\pk = r_\pk^\pk $, the triangles $\triangle(\t_\pk,\zero,\r_\pk)$ and
$\triangle(\L^+_\pk,\zero,\r^+_\pk)$ are similar. Thus
\begin{align}\label{E:bound dot q l}
 \frac{\dist(\zero, \L^+_{\pk})}{ \dist(\zero, \L_{\pk})}
 \le \frac{\dist(\zero, \L^+_{\pk})}{ \dist(\zero, \t_{\pk})}
 = \frac{\dist(\zero, \r^+_{\pk})}{ \dist(\zero, \r_{\pk})}
 \le  \rtk .
\end{align}
Here, the last inequality follows from \eqref{E:similarity} and \eqref{E:ratio assumption}.

\begin{figure}
\centerline{\epsfysize=1.7truein\epsfbox{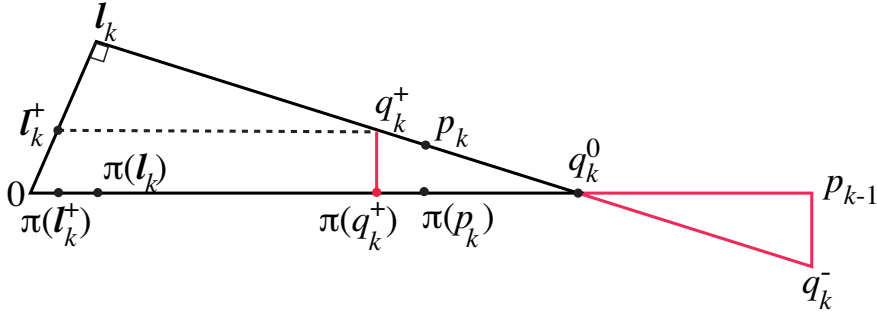}}
\caption{Some useful similitudes. {\blue Note that, {\green analogously to what observed in the comment to Figure 2}, it maybe also possible that $\p_\pk$ (resp, $\pi_{\pk-1} (\p_\pk)$) be located between $\q_\pk^0$ and $\q^-_\pk$ (resp,  $\p_{\pk-1}$).} }
\label{fig3}
\end{figure}

Now the triangle $\triangle(\zero, \L_{\pk}, \q^0_\pk)$
formed by the three points
$\zero$, $\L_{\pk}$ and $\q^0_\pk$ has a right angle at $\L_{\pk}$.  This entire triangle
projects to a line segment in $\R^\pkp$, with the projection $\pi_\pkp$ preserving the
order of points and ratios of distances along the edges of $\triangle(\zero, \L_{\pk}, \q^0_\pk)$
as in Figure 3 --- a fact we shall continue to use subsequently.

Similarity to
$\triangle (\L^+_\pk,\L_\pk,\q^+_\pk)$ combines with
\eqref{E:bound dot q l} to yield
\begin{align}\label{E:similarity l}
\frac{\dist(\pi_{\pk-1} (\q^0_\pk), \pi_{\pk-1} (\q^+_\pk))}{ \dist (\pi_{\pk-1} (\q^0_\pk),  \pi_{\pk-1} (\L_{\pk}))}
= \frac{ \dist (\q^0_\pk, \q^+_\pk)}{ \dist ( \q^0_\pk, \L_{\pk})}
=\frac{\dist(\zero, \L^+_{\pk})}{ \dist (\zero, \L_{\pk})} \le \rtk .
 \end{align}

 Now we are ready to verify \Akp and \Bkp.
\bigskip
{\blue 
\subsubsection*{Verification of \Akp}
 }
\begin{claim}\label{C:Akp}
Definitions \eqref{E:choose L}--\eqref{E:choose p} imply \Akp.
\end{claim}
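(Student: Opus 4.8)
The plan is to reduce Claim~\ref{C:Akp} to a one-line inequality living in the plane $T_\pk$. By \eqref{E:Akp less} we already have $\p_\pkp\in P_{\pk-1}$ and $\y_\pkp\in[\zero,\p_\pkp]$, so the only part of \Akp still to check is $\p_\pkp\in R_\pkp$. Since $\p_\pkp=\pi_{\pk-1}(\q^-_\pk)$ by \eqref{E:choose p} and $R_\pkp$ is the box described in \eqref{eq:Rk}, it suffices to prove $|(\q^-_\pk)^i|\le \ctk^{n-\pk+1}na^i$ for every $i\in\{1,\dots,\pk-1\}$; and since $\p_\pk\in R_\pk$ (assumption \Ak) gives $|\p_\pk^i|\le\ctk^{n-\pk}na^i$ for those $i$, it is in fact enough to establish $|(\q^-_\pk)^i|\le\ctk\,|\p_\pk^i|$ for all $i\le\pk-1$.

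I would prove this inside the two-dimensional plane $T_\pk$. Because $e_\pk\in T_\pk$, the restriction of $\pi_{\pk-1}$ to $T_\pk$ kills the $x^\pk$-direction, so $\pi_{\pk-1}(T_\pk)$ is a line through the origin of $\R^{\pk-1}$ and every $\x\in T_\pk$ satisfies $\pi_{\pk-1}(\x)=w(\x)\,\pi_{\pk-1}(e_w)$, where $w$ is the signed Euclidean coordinate on $T_\pk$ along the line $T_\pk\cap\{x^\pk=0\}$ and $e_w$ a unit vector along it. Hence $(\q^-_\pk)^i=\frac{w(\q^-_\pk)}{w(\p_\pk)}\,\p_\pk^i$ for all $i\le\pk-1$, so, after orienting $e_w$ so that $w(\p_\pk)>0$ (legitimate since $\p_\pk$ has been perturbed off the $x^\pk$-axis), it remains to prove $|w(\q^-_\pk)|\le\ctk\,w(\p_\pk)$. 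Writing $\l_\pk=\{w=w_0+\kappa x^\pk\}$ (possible after the generic perturbations agreed at the start of this subsection), with $w_0=w(\q^0_\pk)$, evaluation at $x^\pk=\pm\ctk^{n-\pk}na^\pk$ and at $x^\pk=\p_\pk^\pk$ gives $w(\q^\pm_\pk)=w_0\pm\kappa\,\ctk^{n-\pk}na^\pk$ and $w(\p_\pk)=w_0+\kappa\,\p_\pk^\pk$; computing the foot $\L_\pk$ of the perpendicular from $\zero$ to $\l_\pk$ and inserting it into \eqref{E:similarity l} converts that bound into the two clean facts $|\kappa|\,\ctk^{n-\pk}na^\pk\le\rtk\,|w_0|$ and $\kappa\,w_0<0$.

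From these, the conclusion is forced by the definition \eqref{E:cn} of $\ctk$. Indeed $|\p_\pk^\pk|\le\ctk^{n-\pk}na^\pk$ and $|\kappa|\,\ctk^{n-\pk}na^\pk\le\rtk|w_0|$ give $|w(\p_\pk)-w_0|=|\kappa\,\p_\pk^\pk|\le\rtk|w_0|<|w_0|$, so $w(\p_\pk)>0$ forces $w_0>0$, hence $\kappa<0$. Then $w(\p_\pk)\ge(1-\rtk)w_0>0$ while $w(\q^-_\pk)=w_0+|\kappa|\,\ctk^{n-\pk}na^\pk\le(1+\rtk)w_0$, so
\[
\frac{|w(\q^-_\pk)|}{w(\p_\pk)}\ \le\ \frac{1+\rtk}{1-\rtk}\ =\ 1+2\,\frac{\rtk}{1-\rtk}\ \le\ \ctk .
\]
Feeding this back, $|(\q^-_\pk)^i|\le\ctk\,|\p_\pk^i|\le\ctk^{n-\pk+1}na^i$ for all $i\le\pk-1$, i.e.\ $\p_\pkp\in R_\pkp$; with \eqref{E:Akp less} this is exactly \Akp.

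The one genuinely delicate step is getting the two facts $|\kappa|\,\ctk^{n-\pk}na^\pk\le\rtk|w_0|$ and $\kappa w_0<0$ out of \eqref{E:similarity l}: one must use the triangle whose right angle is at $\L_\pk$, exploit that $\q^+_\pk$ and $\L^+_\pk$ lie on the common level set $\{x^\pk=\ctk^{n-\pk}na^\pk\}$ so that the relevant chords are parallel, keep the signs straight because $\q^0_\pk$, $\q^-_\pk$, and $\L_\pk$ may fall on either side of the origin in the $w$-coordinate, and use that $\L_\pk$ lies strictly above $Q^+_\pk$ (established just before \eqref{E:bound dot q l}). Everything else is routine bookkeeping, and the constant $\ctk$ in \eqref{E:cn} is plainly tailored so that the last display closes.
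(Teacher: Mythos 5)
Your argument is correct and is essentially the paper's own proof in different clothing: both reduce \Akp to $\p_\pkp\in R_\pkp$, and both obtain it from \eqref{E:similarity l}, the right angle of $\triangle(\zero,\L_{\pk},\q^0_\pk)$ at $\L_{\pk}$ together with $\L_{\pk}$ lying above $Q^+_\pk$, the bound on $|\p_\pk^\pk|$ coming from $\p_\pk\in R_\pk$, and the definition \eqref{E:cn} of $\ctk$, closing with the same factor $\frac{1+\rtk}{1-\rtk}=1+2\frac{\rtk}{1-\rtk}\le\ctk$. The ``delicate step'' you defer does go through exactly as you indicate (the foot of the perpendicular to $\l_\pk$ has $x^\pk$-coordinate $-\kappa w_0/(1+\kappa^2)$, so $\L_{\pk}$ above $Q^+_\pk$ forces $\kappa w_0<0$, while \eqref{E:similarity l} combined with $|w(\L_{\pk})-w_0|\le|w_0|$ yields $|\kappa|\,\ctk^{n-\pk}na^\pk\le\rtk|w_0|$), so the only real difference from the paper is bookkeeping: you use explicit $(w,x^\pk)$ slope--intercept coordinates in $T_\pk$, whereas the paper chains collinear distance inequalities along the projected line, using that $\q^0_\pk$ bisects $[\q^+_\pk,\q^-_\pk]$.
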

\begin{proof}
By \eqref{E:Akp less} it is enough to show that $\p_\pkp \in R_\pkp$.
This should be clear from the geometric construction.
Here the factor $\ctk$ in the definition $ R_\pkp= \ctk \pi_{\pk-1} (R_\pk)$ plays a crucial role.
To give details, first note that \Ak\ implies
\begin{align*}
\pi_{\pk-1} (\p_\pk) \in \pi_{\pk-1} (R_\pk).
\end{align*}
From $\p_\pk \in [\q^+_\pk, \q^-_\pk]$ and $\p_\pkp = \pi_{\pk-1} (\q^-_\pk)$, we have
\begin{align*}
\dist(\zero, \p_\pkp)
& = \dist(\zero, \pi_{\pk-1} (\p_\pk) ) + \dist (\pi_{\pk-1} (\p_\pk),  \p_\pkp)\\
& \le \dist(\zero, \pi_{\pk-1} (\p_\pk) ) + \dist (\pi_{\pk-1} (\q^+_\pk),  \p_\pkp)\\
&  = \dist(\zero, \pi_{\pk-1} (\p_\pk) ) + 2\dist (\pi_{\pk-1} (\q^+_\pk), \pi_{\pk-1} (\q^0_\pk )) .
\end{align*}
Here, to bound the last line, observe that from \eqref{E:similarity l},
 \begin{align*}
 \dist (\pi_{\pk-1} (\q^+_\pk), \pi_{\pk-1} (\q^0_\pk)  )&  \le \frac{\rtk}{1-\rtk}
 \dist (\pi_{\pk-1} (\L_{\pk}),  \pi_{\pk-1} (\q^+_\pk)).
 \end{align*}
 From the geometry of the right triangle
$\triangle (\zero, \L_{\pk}, \q^0_\pk) \subset T_\pk$,
 \begin{align*}
\dist  (\pi_{\pk-1} (\L_{\pk}), \pi_{\pk-1} (\q^+_\pk))
 & \le  \dist(\zero,  \pi_{\pk-1} ( \p_\pk)).
\end{align*}
Recalling that $\p_\pkp$ is parallel to $\pi_{\pk-1} (\p_\pk)$,
combining the preceding four displayed statements with \eqref{E:cn} yields
$\p_\pkp \in \ctk \pi_{\pk-1} (R_\pk) = R_\pkp$ as desired.
This completes the proof of Claim~\ref{C:Akp}.
\end{proof}

{\blue  
\subsubsection*{Verification of \Bkp}
}
\begin{claim}\label{C:Bkp}
Definitions \eqref{E:choose L}--\eqref{E:choose p} imply \Bkp.
\end{claim}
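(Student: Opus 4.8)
The plan is to carry out the \emph{same single recursion step} as in the proof of Claim~\ref{C:Akp}, but now propagating the quantitative bound $\dtkl$ coming from \Bk{} in place of the mere containment $\p_\pkp\in R_\pkp$. First I would reduce \Bkp{} to an inequality between distances to \emph{points}: since $\p_\pkp\in P_\pkp$ and $\y_\pkp\in[\zero,\p_\pkp]$ by \eqref{E:Akp less}, the affine map $\x\mapsto\dist(\x,P_\pkp)$ vanishes at $\p_\pkp$ and is hence proportional to $\dist(\x,\p_\pkp)$ on $[\zero,\p_\pkp]$, so $\dist(\y_\pkp,P_\pkp)/\dist(\zero,P_\pkp)=\dist(\y_\pkp,\p_\pkp)/\dist(\zero,\p_\pkp)$. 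The identical remark at level $\pk$, together with $\y_\pk\in[\zero,\p_\pk]$ from \Ak, promotes \Bk{} to $\dist(\y_\pk,\p_\pk)\le\dtkl\,\dist(\zero,\p_\pk)$. It therefore suffices to prove $\dist(\y_\pkp,\p_\pkp)\le\dtkp\,\dist(\zero,\p_\pkp)$.

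Next I would exploit that everything collapses onto a line under $\pi_\pkp$. Recall $\p_\pkp=\pi_\pkp(\q^-_\pk)$ and $\y_\pkp=\pi_\pkp(\y_\pk)$. The points $\p_\pk,\q^\pm_\pk,\q^0_\pk,\L_\pk$ lie on $\l_\pk$, so their $\pi_\pkp$-images are collinear with $\pi_\pkp$ preserving their order and mutual ratios; moreover $\y_\pkp$ lies on the ray $\overrightarrow{\zero\,\pi_\pkp(\p_\pk)}$ between $\zero$ and $\pi_\pkp(\p_\pk)$ (it is the projection of $[\zero,\p_\pk]$), and $\p_\pkp$ is parallel to $\pi_\pkp(\p_\pk)$ because both segments $[\zero,\p_\pkp]$ and $[\zero,\pi_\pkp(\p_\pk)]$ contain $\y_\pkp$, by \eqref{E:Akp less}. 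Exactly as in the proof of Claim~\ref{C:Akp}, $\p_\pk\in[\q^+_\pk,\q^-_\pk]$ and $\p_\pkp=\pi_\pkp(\q^-_\pk)$ give the \emph{exact} splitting $\dist(\zero,\p_\pkp)=\dist(\zero,\pi_\pkp(\p_\pk))+\dist(\pi_\pkp(\p_\pk),\p_\pkp)=:a+b$, while the triangle inequality gives $\dist(\y_\pkp,\p_\pkp)\le\dist(\y_\pkp,\pi_\pkp(\p_\pk))+b$. The first term equals $\frac{\dist(\y_\pk,\p_\pk)}{\dist(\zero,\p_\pk)}\,a\le\dtkl\,a$ since $\pi_\pkp$ preserves the ratio along $\overrightarrow{\zero\p_\pk}$. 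For $b$: as $\q^0_\pk$ bisects $[\q^+_\pk,\q^-_\pk]$ and $\p_\pk$ lies in it, $b\le\dist(\pi_\pkp(\q^+_\pk),\pi_\pkp(\q^-_\pk))=2\dist(\pi_\pkp(\q^0_\pk),\pi_\pkp(\q^+_\pk))$; rearranging \eqref{E:similarity l} (just as was done inside the proof of Claim~\ref{C:Akp}) and using the right angle at $\L_\pk$ in $\triangle(\zero,\L_\pk,\q^0_\pk)$ bounds this by $\frac{2\rtk}{1-\rtk}\,a$.

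Finally, writing $b=\beta a$ with $\beta\le\frac{2\rtk}{1-\rtk}$, the two estimates combine to
\[
\frac{\dist(\y_\pkp,\p_\pkp)}{\dist(\zero,\p_\pkp)}\ \le\ \frac{\dtkl\,a+b}{a+b}\ =\ \frac{\dtkl+\beta}{1+\beta}.
\]
When $\dtkl\le1$ this is nondecreasing in $\beta$, hence at most $\frac{\dtkl(1-\rtk)+2\rtk}{1+\rtk}$; substituting $\dtkl=(2(n-\pk+1)-1)\rtk^{2}$ (rearranging \eqref{E:dtk rtk ratio}) and clearing denominators, the desired bound by $\dtkp=(2(n-\pk+1)+1)\rtk$ reduces to $1-2(n-\pk+1)\le2\rtk+(2(n-\pk+1)-1)\rtk^{2}$, which is immediate since the left side is $\le-1$ while the right side is nonnegative. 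If instead $\dtkl\ge1$, the fraction is nonincreasing in $\beta$, so it is $\le\dtkl<\dtkp$ (because $\rtk<1$). In all cases the ratio is $\le\dtkp$, i.e.\ \Bkp. The step I expect to be the crux is keeping the denominator \emph{exact} ($=a+b$, not just $\ge a$): the factor $1/(1-\rtk)$ in the estimate of $b$ is genuinely wasteful and is neutralized only because the same $b$ also enlarges the denominator, turning the ratio into $(\dtkl+\beta)/(1+\beta)$ rather than $\dtkl+\beta$; this is precisely what makes the definitions of $\rtk,\dtkl$ strong enough — with ample slack, matching the Remark that sharpness of these exponents has not been investigated. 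Everything else is the same similar-triangle bookkeeping already carried out in verifying \Akp.
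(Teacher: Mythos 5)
Your argument is correct, and its geometric skeleton is the same as the paper's: reduce \Bkp{} by similarity to bounding $\dist(\y_\pkp,\p_\pkp)/\dist(\zero,\p_\pkp)$, split $\dist(\y_\pkp,\p_\pkp)$ along the projected ray into a piece controlled by \Bk{} and the piece $b=\dist(\pi_{\pk-1}(\p_\pk),\p_\pkp)$, and control $b$ via $\q^{\pm}_\pk$, $\q^0_\pk$ and \eqref{E:similarity l}. Where you genuinely deviate is in how the estimate is closed. The paper bounds $b\le 2\rtk\,\dist(\pi_{\pk-1}(\L_\pk),\pi_{\pk-1}(\q^0_\pk))\le 2\rtk\,\dist(\zero,\p_\pkp)$, i.e.\ it measures the error term against the full denominator rather than against $a=\dist(\zero,\pi_{\pk-1}(\p_\pk))$; the factor $1/(1-\rtk)$ then never appears, and \Bkp{} follows in one line from the relation $\dtkp\ge\dtk+2\rtk$ (the unnumbered companion of \eqref{E:dtk rtk ratio}), which is precisely what the $\delta_i$'s were designed to satisfy. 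You instead recycle the bound $b\le\frac{2\rtk}{1-\rtk}\,a$ from the proof of Claim~\ref{C:Akp}, and must then keep the denominator exact and maximize $(\dtk+\beta)/(1+\beta)$ over $0\le\beta\le 2\rtk/(1-\rtk)$, with a case split according to whether $\dtk\le1$ or $\dtk\ge1$. Your closing algebra checks out: with $i=n-\pk+1$ and $\gamma=\rtk$ the needed inequality is $(2i-1)\gamma^{2}(1-\gamma)+2\gamma\le(2i+1)\gamma(1+\gamma)$, which holds since $i\ge1$ and $0\le\gamma<1$, and in the other case $\dtk<\dtkp$ suffices. So both proofs rest on the same similar-triangle facts and the same decomposition; the paper's choice of normalizer makes the final arithmetic a one-liner, whereas your route needs the monotonicity/case analysis to neutralize the $1/(1-\rtk)$ loss --- but it does close, and nothing essential is missing.
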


\begin{proof}
Since $\y_\pkp \in [\zero, \p_\pkp]$, from similarity
\begin{align*}
\frac{ \dist(\y_\pkp, P_{\pk-1} ) }{\dist(\zero, P_{\pk-1})} = \frac{\dist (\y_\pkp, \p_\pkp ) }{\dist(\zero, \p_\pkp)}.
\end{align*}
To bound the latter:
\begin{align*}
& \dist (\y_\pkp, \p_\pkp )\\
& = \dist(\pi_{\pk-1}( \y_\pk), \pi_{\pk-1} (\p_\pk) ) + \dist ( \pi_{\pk-1}(\p_\pk), \p_\pkp)\\
& \le \dtk \dist(\zero, \pi_{\pk-1} (\p_\pk) ) + \dist ( \pi_{\pk-1}(\p_\pk), \p_\pkp) & \hbox{ (by \Bk  and similarity)}\\
& \le  \dtk \dist(\zero, \p_\pkp) +  \dist(\pi_{\pk-1} (\p_\pk), \p_\pkp ) & \hbox{(by $[\zero, \pi_{\pk-1} (\p_\pk)] \subset [\zero, \p_\pkp]$)} \\
& \le  \dtk \dist(\zero, \p_\pkp) +  \dist(\pi_{\pk-1} (\q_\pk^+), \p_\pkp ) & \hbox{(by $[\pi_{\pk-1} (\p_\pk),\p_\pkp ] \subset[\pi_{\pk-1} (\q^+_\pk ), \p_\pkp ]$)} \\
& \le \dtk \dist(\zero, \p_\pkp) + 2 \rtk  \dist(\pi_{\pk-1} (\L_{\pk}), \pi_{\pk-1} (\q^0_\pk) )
&\hbox{(by \eqref{E:similarity l})}\\
&\le   (\dtk  + 2\rtk)  \dist(\zero, \p_\pkp). &\hbox{(by $[ \pi_{\pk-1} (\L_{\pk}), \pi_{\pk-1} (\q^0_\pk) ] \subset [\zero, \p_\pkp]$)}
\end{align*}
Therefore,  by \eqref{E:dtk rtk ratio},
\begin{align*}
\frac{ \dist(\y_\pkp, P_{\pk-1} ) }{\dist(\zero, P_{\pk-1})} \le \dtkp
\end{align*}
which is the desired result. This completes the proof of Claim~\ref{C:Bkp}.
\end{proof}

We have shown that  the assumptions \Akp, \Bkp and \Ckp are satisfied, therefore we can continue
the recursion until we arrive at {\bf Case I}, where we get the desired result {\blue \eqref{E:key reformulation}},
or else, at worst, arrive at the following scenario.

\bigskip
{\blue 
\subsection{Final remaining possibility: the recursion reaches $\pk=1$.}
}
Suppose that this recursive procedure does not stop before we find
$\p_1,P_1 \in \R^1$ satisfying
\Aone, \Bone and \Cone and decrease $\pk$ from $2$ to $1$.
We now show the desired result can be established in this case.
Writing $\p_1=p_1$ to emphasize that we are now dealing with 1-tuples,
\Aone yields $P_1 = \{ p_1\} \subset R_1 = \{x \in \R \mid |x| \le \ctk^{n-1} a^1 \}$,
 so $2\dist(0, P_1) \le \diam(P_1^\perp\cap R_1).$
Therefore,
\begin{align*}
\frac{\dist(\y_1, P_1)}{\diam(P_1^\perp\cap R_1)} & \le  \frac{\dist(\y_1, P_1)}{2\dist(0, P_1) }  \\
& \le {\dtn}/{2} \qquad \hbox{ (by \Bone)}.
\end{align*}
Since \Cone guarantees $\tilde P_1$ is disjoint from the interior of $S$, we can
argue exactly as {\bf Case I} to show the supporting hyperplane $P$ of $S$
parallel to $\tilde P_1$ satisfies
\begin{align*}
\frac{\dist(\y, P)}{\diam(P^\perp\cap S)}& \leq n^{3/2} \frac{\dist(\y, \tilde P_1)}{\diam(\tilde P_1^\perp\cap R_n)}\\
&= n^{3/2} \ctk^{n-1} \frac{\dist(\y_1, P_1)}{\diam(P_1^\perp\cap R_1)}\\
& \le n^{3/2}  \ctk^{n-1}  {\blue   \dtn/2 } \\
&= n^{3/2} \ctk^{n-1} (n-{\textstyle \frac{1}{2}}) s^{1/2^{n-1}},
\end{align*}
the desired result {\blue \eqref{E:key reformulation}}.


\bigskip

\bibliographystyle{plain}

\end{document}